\newcommand{\shadetheboxes}[1]{
	\foreach \x/\y in {#1}
      	\fill[pattern color = black!75, pattern=north east lines] (\x,\y) rectangle +(1,1);
	}
\newcommand{\shadetherects}[1]{
	\foreach \x/\y/\xt/\yt in {#1}
      	\fill[pattern color = black!75, pattern=north east lines] (\x,\y) rectangle (\xt,\yt);
	}
\newcommand{\drawthegrid}[1]{
	\draw (0.01,0.01) grid (#1+0.99,#1+0.99);
	}
\newcommand{\drawverticallines}[3]{
	\foreach \x in {#2}
		\draw[line width=#3] (\x+0.01,0.01) -- (\x+0.01,#1+0.99);
	}
\newcommand{\drawhorizontallines}[3]{
	\foreach \y in {#2}
		\draw[line width=#3] (0.01,\y+0.01) -- (#1+0.99,\y+0.01);
	}
\newcommand{\drawtheclpattern}[1]{
	\foreach \x/\y in {#1}
      	\filldraw (\x,\y) circle (6pt);
	}
\newcommand{\drawtheclpatternwhite}[1]{
	\foreach \x/\y in {#1}
      	\draw[fill=white] (\x,\y) circle (6pt);
	}
\newcommand{\drawtheclpatternwhitebig}[1]{
	\foreach \x/\y in {#1}
		\draw[fill=white] (\x,\y) circle (11pt);
	}
\newcommand{\drawspecialbox}[1]{
	\foreach \x/\y/\z/\w/\A in {#1}
		{
       		\fill[color = white!100, opacity=1, rounded corners = 1.5pt] (\x+0.125,\y+0.125) rectangle (\z-0.125,\w-0.125);
       		\draw[color = black, rounded corners = 1.5pt] (\x+0.125,\y+0.125) rectangle (\z-0.125,\w-0.125);
       		\fill[black] (\x/2+\z/2,\y/2+\w/2) node {$\scriptstyle\A$};
       	}
    }
\newcommand{\onetwo}{\mpatternnl{scale=0.2}{2}{1/1,2/2}{}}		
\newcommand{\drawsolidshadedbox}[1]{
	\foreach \x/\y/\z/\w/\A in {#1}
		{
       		\fill[color = gray!50, opacity=1, rounded corners=1.5pt] (\x+0.125,\y+0.125) rectangle (\z-0.125,\w-0.125);
       		\draw[color = black, rounded corners=1.5pt] (\x+0.125,\y+0.125) rectangle (\z-0.125,\w-0.125);
       		\fill[black] (\x/2+\z/2,\y/2+\w/2) node {$\scriptstyle\A$};
       	}
    }
\newcommand{\drawascendingrestrictions}[1]{
	\foreach \x/\y/\z/\w in {#1}
		{
       		\fill[color = white!100, opacity=1, rounded corners=1.5pt] (\x+0.125,\y+0.125) rectangle (\z-0.125,\w-0.125);
       		\fill[pattern color = black!65, pattern=north east lines, rounded corners=1.5pt] (\x+0.125,\y+0.125) rectangle (\z-0.125,\w-0.125);
       		\draw[color = black, rounded corners=1.5pt] (\x+0.125,\y+0.125) rectangle (\z-0.125,\w-0.125);
       		\fill[black] (\x/2+\z/2,\y/2+\w/2) node {\onetwo};
       	}
    }    
\newcommand{\mpattern}[4]{										
  \raisebox{0.6ex}{
  \begin{tikzpicture}[scale=0.35, baseline=(current bounding box.center), #1]
  	\useasboundingbox (0.0,-0.1) rectangle (#2+1.4,#2+1.1);
	
    \shadetheboxes{#4}
    
    \drawthegrid{#2}
    
    \drawtheclpattern{#3}
    
  \end{tikzpicture}}
}
\newcommand{\mpatternnl}[4]{										
  \raisebox{0.6ex}{
  \begin{tikzpicture}[scale=0.35, baseline=(current bounding box.center), #1]
  	\useasboundingbox (0.85,-0.1) rectangle (#2+1.4,#2+1.1);
	
    \shadetheboxes{#4}
    
    \drawtheclpattern{#3}
    
  \end{tikzpicture}}
}
\newcommand{\mpatternwwo}[6]{									
  \raisebox{0.6ex}{
  \begin{tikzpicture}[scale=0.35, baseline=(current bounding box.center), #1]
  \useasboundingbox (0.0,-0.1) rectangle (#2+1.4,#2+1.1);
  
    \shadetheboxes{#6}
    
    \drawthegrid{#2}
    
    \drawtheclpatternwhite{#4}
    \drawtheclpatternwhitebig{#5}
    \drawtheclpattern{#3}
    
  \end{tikzpicture}}
}
\newcommand{\mmpattern}[5]{									
  \raisebox{0.6ex}{
  \begin{tikzpicture}[scale=0.35, baseline=(current bounding box.center), #1]
  \useasboundingbox (0.0,-0.1) rectangle (#2+1.4,#2+1.1);
    
    \shadetheboxes{#4}
    
    \drawthegrid{#2}
    
    \drawspecialbox{#5}
    
    \drawtheclpattern{#3}

  \end{tikzpicture}}
}
\newcommandx{\descriptionpatt}[9][8={},9={}]
{
	\raisebox{0.6ex}{
  \begin{tikzpicture}[scale=0.35, baseline=(current bounding box.center), #1]
  	\useasboundingbox (0.0,-0.1) rectangle (#2+1.4,#3+1.1);
		
		\shadetherects{#7}

		\foreach \pos/\type in {#4}
		{
			\ifthenelse{\equal{\type}{v}}
			{
				\drawverticallines{#3}{\pos}{0.8pt}
			}
			{
				\drawhorizontallines{#2}{\pos}{0.8pt}
			}
		}

		\foreach \pos/\type in {#5}
		{
			\ifthenelse{\equal{\type}{v}}
			{
				\drawverticallines{#3}{\pos}{0.4pt}
			}
			{
				\ifthenelse{\equal{\type}{d}}
				{
					\draw[densely dashed] (\pos,0) -- (\pos,#3+1);
				}
				{
					\drawhorizontallines{#2}{\pos}{0.4pt}
				}
			}
		}
		
		\drawascendingrestrictions{#8}
		\drawspecialbox{#9}
		
		\foreach \x/\y/\type in {#6}
		{
			\ifthenelse{\equal{\type}{a}}
			{
				\draw (\x,\y) circle (6pt);
				\filldraw (\x,\y) circle (3pt);
			}
			{
				\filldraw (\x,\y) circle (5pt);
			}
		}
    
  \end{tikzpicture}}
} 
\newcommand{\descpatte}[8]
{
	\begin{tikzpicture}[scale=0.35, baseline=(current bounding box.center), #1]
		\shadetherects{#5}
		\draw (0.01,0.01) grid (#2+1-0.01,#3+1-0.01);
		
		\drawsolidshadedbox{#7}
		\drawspecialbox{#6}
		\drawspecialbox{#8}

		\foreach \x/\y/\type in {#4}
		{
			\ifthenelse{\equal{\type}{s}}
			{
				\draw (\x,\y) circle (6pt);
				\filldraw (\x,\y) circle (3pt);
			}
			{
				\filldraw (\x,\y) circle (5pt);
			}
		}
		
  \end{tikzpicture}
} 
\pgfmathsetmacro{\patttablecale}{1.05}
\pgfmathsetmacro{\pattdispscale}{0.80}
\pgfmathsetmacro{\patttextscale}{0.6}
\newcommand{\mmpatterna}[5]{
  \raisebox{0.6ex}{
  \begin{tikzpicture}[scale=0.35, baseline=(current bounding box.center), #1]
  	\useasboundingbox (0.0,-0.1) rectangle (#2+1.4,#2+1.1);
	
    \shadetheboxes{#4}
    
    \drawthegrid{#2}
    
    \fill[color = white!100, opacity=1, rounded corners = 1.5pt] (2,3.9) -- (1.1,3.9) -- (1.1,2.1) -- (2.1,2.1) -- (2.1,1.1) -- (3.9,1.1) -- (3.9,2.9) -- (2.9,2.9) -- (2.9,3.9) -- (2,3.9);
    \draw[color = black, rounded corners = 1.5pt] (2,3.9) -- (1.1,3.9) -- (1.1,2.1) -- (2.1,2.1) -- (2.1,1.1) -- (3.9,1.1) -- (3.9,2.9) -- (2.9,2.9) -- (2.9,3.9) -- (2,3.9);
    
    \fill[black] (2.5,2.5) node {$\scriptstyle1$};
    
    \drawspecialbox{#5}
    
    \drawtheclpattern{#3}
    
  \end{tikzpicture}}
}
\newcommand{\mmpatternb}[5]{
  \raisebox{0.6ex}{
  \begin{tikzpicture}[scale=0.35, baseline=(current bounding box.center), #1]
  	\useasboundingbox (0.0,-0.1) rectangle (#2+1.4,#2+1.1);
	
    \shadetheboxes{#4}
    
    \drawthegrid{#2}
       
    \fill[color = white!100, opacity=1, rounded corners = 1.5pt] (4,4.9) -- (3.1,4.9) -- (3.1,4.1) -- (4.1,4.1) -- (4.1,3.1) -- (4.9,3.1) -- (4.9,4.9) -- (4,4.9);
    \draw[color = black, rounded corners = 1.5pt] (4,4.9) -- (3.1,4.9) -- (3.1,4.1) -- (4.1,4.1) -- (4.1,3.1) -- (4.9,3.1) -- (4.9,4.9) -- (4,4.9);
    
    \fill[color = white!100, opacity=1, rounded corners = 1.5pt] (0.1,1) -- (0.1,1.9) -- (0.9,1.9) -- (0.9,0.9) -- (1.9,0.9) -- (1.9,0.1) -- (0.1,0.1) -- (0.1,1);
    \draw[color = black, rounded corners = 1.5pt] (0.1,1) -- (0.1,1.9) -- (0.9,1.9) -- (0.9,0.9) -- (1.9,0.9) -- (1.9,0.1) -- (0.1,0.1) -- (0.1,1);
        
    \drawspecialbox{#5}
    
    \drawtheclpattern{#3}
    
  \end{tikzpicture}}
}
\newcommand{\gen}{\mathsf{Gen}}
\newcommand{\mine}{\mathsf{Mine}}
\newcommand{\bisc}{\mathsf{BiSC}}
\newcommand{\Av}{\mathrm{Av}}
\newcommand{\fl}{\mathrm{fl}}
\newcommand{\sh}{\mathrm{sh}}
\newcommand{\forb}{\mathrm{forb}}
\newcommand{\subwords}{\mathrm{subwords}}
\newtheorem{theorem}{Theorem}[section]
\newtheorem{conjecture}{Conjecture}[theorem]
\newtheorem{lemma}[theorem]{Lemma}
\title{BiSC: An algorithm for discovering generalized permutation patterns}
\author{Henning Ulfarsson\thanks{Supported by grant no.\ 090038013-4 from the Icelandic Research Fund.}\\
Reykjavik University\\
\href{mailto:henningu@ru.is}{henningu@ru.is}
}
\date{\today}
\begin{document}
\maketitle

\begin{abstract}
  Theorems relating permutations with objects in other fields of mathematics are
  often stated in terms of avoided patterns. Examples include various classes of
  Schubert varieties from algebraic geometry (Billey and Abe 2013), commuting
  functions in analysis (Baxter 1964), beta-shifts in dynamical systems (Elizalde
  2011) and homology of representations (Sundaram 1994). We present a new
  algorithm, BiSC, that, given any set of permutations, outputs a conjecture
  for describing the set in terms of avoided patterns. The algorithm automatically conjectures the statements of known theorems
  such as the descriptions of smooth (Lakshmibai and Sandhya 1990) and
  forest-like permutations (Bousquet-M{\'e}lou and Butler 2007), Baxter
  permutations (Chung et al.\ 1978), stack-sortable (Knuth 1975) and
  West-2-stack-sortable permutations (West 1990). The algorithm has also been
  used to discover new theorems and conjectures related to the dihedral and alternating subgroups
  of the symmetric group, Young tableaux, Wilf-equivalences, and sorting devices.

\end{abstract}

\section{Introduction}
\label{sec:in}

Permutation patterns have established connections between certain subsets of
permutations and objects in other fields, including algebraic
geometry (Billey and Abe~\cite{BH}), analysis (Chung, Graham, Hoggatt and Kleiman~\cite{Baxter}, Graham~\cite{BaxterOLD}), dynamical
systems (Elizalde~\cite{beta}), algorithm analysis in computer science (Knuth~\cite{K}) and homology (Sundaram~\cite{Sun}).
Table~\ref{tab:props} gives examples of theorems of this nature.

We introduce the $\bisc$ algorithm, which was inspired by
an open problem posed by Billey~\cite{SaraTalk}, who asked whether a computer
could ``learn'' mesh patterns\footnote{In the name of the algorithm ``Bi'' is short for Billey, ``S'' is short for
Steingrimsson, the last name of Einar Steingrimsson, my postdoctoral advisor, and ``C'' is short for Claesson, the last name
of Ander Claesson, a colleague who has inspired me throughout my career.}. The algorithm can automatically conjecture all
of the known theorems in Table~\ref{tab:props}, and the new statements in
Table~\ref{tab:newprops} were all conjectured first by $\bisc$.

\begin{table}[htp]
  \begin{center}
    \begin{tabular}{lll}
      \toprule
      Property of perms.\         & Forbidden patterns                     & Reference               \\
      \midrule
      smooth                      & $1324$, $2143$                         & \cite[Thm.~1]{LS}       \\
      forest-like                 & $1324$, $(2143,\{(2,2)\})$             & \cite[Thm.~1]{BMB}      \\
      Baxter                      & $(2413,\{(2,2)\})$, $(3142,\{(2,2)\})$ & \cite[p.~383]{Baxter}   \\
      stack-sortable              & $231$                                  & \cite[Section 2.2.1]{K} \\
      West-$2$-stack-sortable     & $2341$, $(3241,\{(1,4)\})$             & \cite[Thm.~4.2.18]{W90} \\
      simsun                      & $(321,\{(1,0),(1,1),(2,2)\})$          & \cite[p.~7]{BC}         \\
      \bottomrule
    \end{tabular}
  \end{center}
  \caption{Statements of known theorems $\bisc$ can rediscover}
  \label{tab:props}
\end{table}%

\paragraph{Basic definitions.}
A \emph{permutation} of length $n$ is a bijection from the set $\{1, \dotsc, n\}$ to
itself. We write permutations in \emph{one-line notation}, where $\pi_1
  \pi_2 \dotsm \pi_n$ is the permutation that sends $i$ to $\pi_i$. The \emph{empty}
permutation is the unique permutation of length $0$, and is denoted $\epsilon$.
If $w = w_1 w_2 \dotsm w_k$ is a word of distinct integers, $\fl(w)$ is the
permutation obtained by replacing the $i$th smallest letter in $w$ with $i$.
This is called the \emph{flattening} of $w$. A permutation $\pi$ of length $n$
\emph{contains} a permutation $p$ of length $k$ if there exist indices $1 \leq
  j_1 < j_2 < \dotsm < j_k \leq n$ such that $\fl(\pi_{j_1} \pi_{j_2} \dotsm
  \pi_{j_k}) = p$. In this context $p$ is called a \emph{(classical) pattern} in
$\pi$ and the values $\pi_{j_1} \pi_{j_2} \dotsm \pi_{j_k}$ are an
\emph{occurrence} of $p$ in $\pi$.

Consider, for example, the permutation $\pi = 47318265$. This permutation contains
the pattern $231$ since the subword $482$ is an occurrence of $231$.

If $\pi$ does not contain $p$ then it
\emph{avoids} $p$. The permutation $\pi = 47318265$ from above avoids the pattern
$4321$.
We let $S$ denote the set of all permutations and $\Av(P)$
denote the permutations that avoid all the patterns in a set $P$. If $A$
is any set of permutations let $A_{\leq n}$ be the subset of permutations of
length at most $n$, and $A_n$ be the subset of permutations with length exactly $n$.

\begin{table}[htb]
  \begin{center}
    \begin{tabular}{lll}
      \toprule
      Property of perms.\                      & Forbidden patterns                & Reference                    \\
      \midrule
      dihedral subgroup                        & $16$ patterns                     & Conj.~\ref{conj:dihedral}    \\
      alternating subgroup                     & infinitely many patterns          & Conj.~\ref{conj:alternating} \\
      Young tableaux avoiding {\tiny\yng(3,2)} & four patterns                     & Conj.~\ref{conj:tableaux32}  \\
      Wilf-equival. for $\Av(231,p)$           & Different for each $p$            & \cite{LaraTalk}              \\
      $1$-quick-sortable                       & $321$, $2413$, $(2143,\{(2,2)\})$ & \cite{H}                     \\
      $1324$-restricted stack-sorting          & four patterns                     & Conj.~\ref{conj:stack}       \\
      \bottomrule
    \end{tabular}
  \end{center}
  \caption{Statements of new theorems and conjectures motivated by $\bisc$}
  \label{tab:newprops}
\end{table}%

\noindent
The algorithms treated here have been implemented in the Python library Permuta~\cite{permuta}.
The library contains a tutorial on how to use the algorithms.

\section{Learning patterns}
\label{sec:grim}

Given a set of permutations $A$, we call a set of patterns $P$ a \emph{basis}
for $A$ if $A = \Av(P)$, and no subset of $P$ has this property. The set $A$ is
called a \emph{permutation class} if $P$ only contains classical patterns.
There is a well-known algorithm whose input is a permutation class (known to have a
basis with patterns of length at most $m$) presented
as a list of permutations in length order that can find the basis. Assume we
input the permutations below.
\begin{align*}
   & 1,
  12,
  21,
  123,
  132,
  213,
  312,
  321,
  1234,
  1243,
  1324,
  1423,
  1432,
  2134,
  2143,
  3124,    \\
   & 3214,
  4123,
  4132,
  4213,
  4321.
\end{align*}
The first missing permutation is $231$, so we add it to our potential basis $P = \{231\}$.
When we reach the permutations of length $4$ we see that $1342$, $2314$,
$2341$, $2413$, $2431$, $3142$, $3241$, $3412$, $3421$, $4231$ and $4312$ are all missing.
All of these, except the last one, contain the pattern $231$, which is already in the basis, so we should expect them to be missing.
The last one, the permutation $4312$, does not contain the pattern $231$, so we extend the basis to
$P = \{231, 4312\}$. If the input had permutations of length $5$, we would continue in the
same manner: checking whether the missing permutations contain a previously forbidden pattern, and
if not, extending the basis by adding new classical patterns. This process continues until we
reach $m$, the length of the longest basis elements.

Classical patterns are sufficient to describe some sets of permutations. However, in other
cases, they do not suffice. Consider for example the West-$2$-stack-sortable permutations~\cite{W90}:
\begin{align*}
   & 1,
  12,
  21,
  123,
  132,
  213,
  231,
  312,
  321,
  1234,
  1243,
  1324,
  1342,
  1423,
  1432,
  2134,    \\
   & 2143,
  2314,
  2413,
  2431,
  3124,
  3142,
  3214,
  3412,
  3421,
  4123,
  4132,
  4213,
  4231,    \\
   & 4312,
  4321, \dotsc,
  35241, \dotsc.
\end{align*}
The first missing permutations are $2341$ and $3241$ so we would initially suspect
$\{2341,3241\} \subseteq P$. There are many missing permutations of length $5$, and all are consequences of $2341$ being forbidden, e.g., $34152$ is not in the
input since the subword $3452$ is an occurrence of $2341$. But one of the input
permutations is $35241$, which contains the previously forbidden pattern
$3241$. It seems that the presence of the $5$ inside the occurrence of $3241$
in $35241$ is important. This inspired West~\cite{W90} to define barred patterns,
but we will use the more expressive mesh patterns of Br\"and\'en and Claesson~\cite{BC},
which allow us to forbid letters from occupying certain regions in a pattern.
In the language of mesh patterns, we must find a shading $R$ such that the mesh pattern $(3241, R)$ is contained
in the permutation $3241$ but avoided by $35241$. In this case, it is easy to
guess $R = \{(1,4)\}$ (the top-most square between $3$ and $2$), which in fact
is the correct choice: the basis $P = \{2341, (3241,\{(1,4)\})\}$ exactly
describes the West-$2$-stack-sortable permutations, as was first shown by West~\cite{W90},
although his notation was in terms of the previously mentioned barred patterns.

Now consider the following input,
\begin{equation} \label{eq:difficult2}
  1,21,321,2341,4123,4321.
\end{equation}
When we see that the permutation $12$ is missing, we add the mesh pattern $(12,\emptyset)$ to our basis. The only
permutation of length $3$ in the input is $321$, which is consistent with our current
basis. We see that every permutation of length four is missing until the permutation $2341$, which
makes sense since each contains $(12,\emptyset)$.
But $2341$ also contains $12$; in fact it contains several occurrences: $23$, $24$ and $34$. These occurrences tell us that the following mesh patterns are actually allowed.
\begin{equation} \label{eq:length2allowed}
  \mpattern{scale=\pattdispscale}{ 2 }{ 1/1, 2/2 }{0/0,0/1,0/2,1/0,1/1,1/2,2/1} \qquad
  \mpattern{scale=\pattdispscale}{ 2 }{ 1/1, 2/2 }{0/0,0/1,0/2,1/0,1/2,2/1,2/2} \qquad
  \mpattern{scale=\pattdispscale}{ 2 }{ 1/1, 2/2 }{0/1,0/2,1/0,1/1,1/2,2/1,2/2}
\end{equation}
It is tempting to guess that what is forbidden is $\mpattern{scale=\patttextscale}{2}{ 1/1, 2/2 }{2/0}$. We
must revisit all the permutations not in the input to check whether they
contain our modified forbidden pattern. This fails for the permutation $231$. We must
add something to our basis, but what? Based on the mesh patterns in \eqref{eq:length2allowed}
we could add $\mpattern{scale=\patttextscale}{2}{ 1/1, 2/2 }{0/0,1/1,2/2}$, which is contained in $231$ but not
contained in any permutation in the input we have looked at up to now. From this small example, it
should be clear that it quickly becomes difficult to go back and forth in the input, modifying the currently
forbidden patterns and ensuring that they are not contained in any permutation in the input while still
being contained in the permutations not in the input. We need a more unified approach. But
first, if the reader is curious, the permutations in \eqref{eq:difficult2} are the permutations of length at most $4$ in the set
\begin{equation}
  \label{eq:av12}
  \Av \left( \mpattern{scale=\pattdispscale}{ 2 }{ 1/1, 2/2 }{0/0,1/1,2/2},\mpattern{scale=\pattdispscale}{ 2 }{ 1/1, 2/2 }{0/2,1/1,2/0} \right).
\end{equation}

The following section presents an algorithmic solution to this problem.

\subsection{The $\bisc$ algorithm}
In the example with West-$2$-stack-sortable permutations, we saw the mesh pattern
$(3241,\{(1,4)\})$ was added to the basis by considering a permutation in the input of length $5$. This
motivates the first step in the high-level description of $\bisc$:
\begin{enumerate}[(1)]
  \item \label{grimsketch1}$\mine$: Record mesh patterns that are contained in permutations in the input
\end{enumerate}
In the second step:
\begin{enumerate}[(1)]
  \addtocounter{enumi}{1}
  \item \label{grimsketch2}$\gen$: Infer the forbidden patterns from the allowed patterns found in step~\eqref{grimsketch1}
\end{enumerate}
More precisely, step~\eqref{grimsketch1} is accomplished with Algorithm~\ref{algo:stein} below.
\begin{algorithm}[htp]
  \DontPrintSemicolon
  \KwIn{$A$, a finite set of permutations; and $m$, an upper bound on the length of the patterns
    to search for}
  \KwOut{A list, $S$, consisting of tuples $(p,\sh_p)$ for all classical patterns, $p$, of length at most $m$. For each
    pattern $p$, $\sh_p$ contains the maximal shadings $R$ of $p$ such that $(p,R)$ has an occurrence in some
    input permutation}
  Initialize $S = \{ (p,\emptyset) \colon p \textrm{ classical pattern of length at most } m\}$\\
  \For {$\pi \in A$}
  {
    \For{$s \in \subwords_{\leq m}(\pi)$}
    { \nllabel{algo:steinsubw}
      Let $p = \fl(s)$\; \nllabel{algo:steinfl}
      Let $R$ be the maximal shading of $p$ for the occurrence $s$ in $\pi$\; \nllabel{algo:steinshmax}
      \If {$R \nsubseteq T$ for all shadings $T \in \sh_p$}
      {	\nllabel{algo:steinsh}
        Add $R$ to $\sh_p$ and remove subsets of $R$\;\nllabel{algo:steinshh}
      }
    }
  }
  \caption{$\mine$: The first step in $\bisc$ finds all allowed mesh patterns}\label{algo:stein}
\end{algorithm}
In line~\ref{algo:steinsubw}, $\subwords_{\leq m}(\pi)$ consists of subwords of length at most $m$ in the permutation $\pi$, so for example, $\subwords_{\leq 2}(1324) =
  \{13,12,14,32,34,24,1,3,2,4,\epsilon\}$.
In line~\ref{algo:steinshmax}, we let $R$ be the maximal shading that can be applied to the classical
pattern $p$ while ensuring that $s$ is still an occurrence of $(p,R)$, see Fig.~\ref{fig:mineEx}.

\begin{figure}[htp]
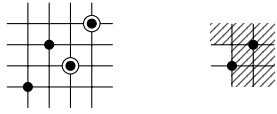

  \begin{center}
    \mpatternwwo{scale=\pattdispscale}{ 4 }{ 1/1, 2/3, 3/2, 4/4 }{}{3/2, 4/4}{} \qquad \mpattern{scale=\pattdispscale}{ 2 }{ 1/1, 2/2 }{0/2, 1/0, 1/1, 1/2, 2/0, 2/1, 2/2 }
    \caption{Applying the maximal shading to the classical pattern $12$ so that $s = 24$ is still an occurrence in $1324$}
    \label{fig:mineEx}
  \end{center}
\end{figure}
\noindent
In lines~\ref{algo:steinsh}-\ref{algo:steinshh}, $\sh_p$ is a set we maintain of maximal shadings of $p$. For example
if we see that
$
  \mpattern{scale=\patttextscale}{ 2 }{ 1/1, 2/2 }{0/0,1/1,2/2}
$
is an allowed shading, and then later come across
$
  \mpattern{scale=\patttextscale}{ 2 }{ 1/1, 2/2 }{0/0,1/1,2/2,0/2,2/0}
$
we will only store the second shading since the first has now become redundant.
We note that classical patterns $p$ that have no occurrence in any permutation $\pi$ in the input $A$
will appear in the output as $(p,\emptyset)$.

Theorems~\ref{thm:biscMainThm} and~\ref{thm:classical} below show that $\bisc$
will never produce patterns contained in the input and, in some cases, a correct description of the input permutations. These theorems rely on the
following two lemmas.

\begin{lemma} \label{lem:R'}
  If $(p,R)$ is any mesh pattern, of length at most $m$, that occurs in a permutation in $A$
  then there exists a mesh pattern $(p,R')$ in the output of $\mine(A,m)$ such that $R \subseteq R'$.
\end{lemma}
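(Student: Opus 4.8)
The plan is to follow, for a fixed classical pattern $p$, the evolution of the collection $\sh_p$ as $\mine$ sweeps through $A$, and to show that $\sh_p$ never discards information about a shading it has already seen. The one elementary fact I need is that mesh-pattern occurrence refines to the maximal shading: if $(p,R)$ occurs in $\pi$ through a subword $s$ with $\fl(s)=p$, and $R_s$ is the maximal shading of $p$ for that occurrence computed in line~\ref{algo:steinshmax}, then $R\subseteq R_s$. This is immediate from the meaning of $R_s$: a box may be shaded for the occurrence $s$ precisely when $\pi$ places no point in the corresponding region, so $R_s$ is exactly the set of all such admissible boxes, and any shading $R$ witnessed by $s$ consists of admissible boxes.

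First I would locate the iteration that does the work. As $(p,R)$ occurs in some $\pi\in A$ and $|p|\le m$, there is a subword $s\in\subwords_{\le m}(\pi)$ with $\fl(s)=p$ realising the occurrence; since $\pi\in A$, the inner loop of Algorithm~\ref{algo:stein} processes this pair and computes the corresponding maximal shading $R_s\supseteq R$ at some step.

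The heart of the argument is an invariant for the update in lines~\ref{algo:steinsh}--\ref{algo:steinshh}: after each iteration, $\sh_p$ is an antichain under $\subseteq$, and every maximal shading $U$ computed for a previously processed subword flattening to $p$ satisfies $U\subseteq T$ for some $T\in\sh_p$. I would prove this by induction on iterations. When the current iteration produces $R_s$, either $R_s\subseteq T$ for some $T\in\sh_p$, and $\sh_p$ is left unchanged while already dominating $R_s$; or $R_s$ is not contained in any $T\in\sh_p$, and the algorithm inserts $R_s$ and deletes its subsets. The main obstacle is to see that these deletions cannot destroy domination: if a dominating element $T$ is deleted, then $T\subseteq R_s$, so any $U$ it dominated satisfies $U\subseteq T\subseteq R_s$ and is now dominated by the freshly inserted $R_s$. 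The antichain property survives because, in this branch, $R_s\not\subseteq T$ for every surviving $T$, while every surviving $T$ fails to be a subset of $R_s$ by the deletion rule.

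Finally I would read off the conclusion at termination. Once the iteration handling $(\pi,s)$ has run, $R_s$ counts as a previously processed shading for every later iteration, so the invariant, applied at the end of $\mine(A,m)$, yields $R'\in\sh_p$ with $R_s\subseteq R'$. Together with $R\subseteq R_s$ this gives $R\subseteq R'$, and the pair $(p,R')$ appears in the output, which is exactly the claim.
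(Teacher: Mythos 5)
Your proof is correct and follows essentially the same route as the paper's: reduce to the maximal shading $R_s \supseteq R$ computed when the loop processes $(\pi,s)$, then argue that $\sh_p$ always retains a superset of $R_s$ until termination. Your explicit induction invariant (domination transfers to the newly inserted superset whenever a dominating element is deleted) is just a more careful writeup of the paper's informal claim that ``the maximal shading of $s$ will never be removed from $\sh_p$ unless it is replaced with a superset.''
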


\begin{proof}
  Let $(p,R)$ be a mesh pattern of length at most $m$ that occurs in the permutation $\pi$
  in $A$. Let $w$ be the subword of $\pi$ which is an occurrence of $(p, R)$.
  At some stage in the for-loop in Algorithm~\ref{algo:stein}, we consider the permutation $\pi$ and the
  subword $w$. Either $R$ is maximal or is a subset of the maximal shading, $R'$ of $s$ in $\pi$.
  Furthermore, either of these shadings will be added to $\sh_p$, \emph{unless} there is already
  a shading in $\sh_p$ that is a superset. As the for-loop continues, the maximal shading of $s$ will
  never be removed from $\sh_p$ unless it is replaced with a superset, which proves the lemma.
\end{proof}

\begin{lemma} \label{lemma:steincl}
  If a set of permutations $A$ is defined by the avoidance of a (possibly infinite)
  list of classical patterns $P$ and $(q,\sh_q)$ with $\sh_q \neq \emptyset$ is in the output of $\mine(A_{\leq n},m)$
  then $q$ is not in the list $P$ and $\sh_q$ only contains the full shading of $q$.
\end{lemma}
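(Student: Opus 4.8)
The plan is to handle the two assertions separately, the second one resting on the fact that $A = \Av(P)$ is closed downward under classical pattern containment. I would argue the first assertion by contraposition. Suppose $q \in P$. Since $A = \Av(P)$, every permutation in $A$ avoids $q$, so $q$ has no occurrence in any permutation of $A_{\leq n}$. Inspecting Algorithm~\ref{algo:stein}, the set $\sh_q$ is enlarged only when some subword $s$ of an input permutation satisfies $\fl(s) = q$; with no such occurrence existing, $\sh_q$ never leaves its initial value $\emptyset$. Hence $\sh_q \neq \emptyset$ forces $q \notin P$.

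For the second assertion, the first step is to show that $q$ itself, viewed as a permutation of length $k = |q| \leq m$, lies in the input $A_{\leq n}$. Since $\sh_q \neq \emptyset$, there is an occurrence of $q$ in some permutation $\pi \in A_{\leq n}$, so in particular $k \leq |\pi| \leq n$. Because $P$ consists of \emph{classical} patterns, $A = \Av(P)$ is downward closed: if $\pi \in A$ contains $q$ and $q$ in turn contained some $p \in P$, then by transitivity of pattern containment $\pi$ would contain $p$, contradicting $\pi \in A$. Thus $q \in A$, and as $k \leq n$ we conclude $q \in A_{\leq n}$.

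The second step is to observe that the permutation $q$ contains the mesh pattern $(q, R)$ where $R$ is the full shading of $q$: the unique length-$k$ subword of $q$ is $q$ itself, with $\fl(q) = q$, and since this occurrence uses every point of $q$, no point lies in any region of the mesh, so every box may legitimately be shaded. Applying Lemma~\ref{lem:R'} to this occurrence of $(q,R)$ in the input permutation $q$, the output of $\mine(A_{\leq n},m)$ contains some $(q,R')$ with $R \subseteq R'$; as the full shading $R$ is already maximal, $R' = R$. Finally, because $\sh_q$ is maintained as an antichain under inclusion (subsets are discarded in lines~\ref{algo:steinsh}--\ref{algo:steinshh}) and the full shading contains every other shading, the full shading is the unique surviving element of $\sh_q$.

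The crux of the argument — and the step I expect to be the main obstacle to state cleanly — is the downward-closure observation: it is precisely the hypothesis that $P$ consists of classical patterns that guarantees $q$ itself appears in the input $A_{\leq n}$, so that $\mine$ processes $q$ as a standalone permutation and thereby records the full shading. (Were $P$ allowed to contain genuine mesh patterns, $A$ need not be downward closed and this inference would fail.) Everything else is routine bookkeeping about how $\sh_q$ evolves in Algorithm~\ref{algo:stein}.
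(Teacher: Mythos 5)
Your proof is correct and follows essentially the same route as the paper's: both arguments hinge on transitivity of classical pattern containment (downward closure of $\Av(P)$) to conclude that $q$ itself lies in the input $A_{\leq n}$, whence the full shading of $q$ gets recorded by $\mine$. The paper phrases this as a short contradiction argument, while you argue directly, treat the $q \notin P$ claim separately, and invoke Lemma~\ref{lem:R'} for the bookkeeping, but the mathematical content is the same.
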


\begin{proof}
  If $\sh_q$ does not contain the complete shading then $q$ is not in $A$, but appears as a classical
  pattern in some larger permutation $\pi$ in $A$. Since $q$ is not in $A$ it must contain one
  of the forbidden patterns $p$ in $P$, which would imply that $p$ also occurs in $\pi$, contradicting
  the fact that $\pi$ is in $A$.
\end{proof}

Step~\eqref{grimsketch2} in the high-level description of the algorithm is implemented
in Algorithm~\ref{algo:grim}, where we generate the forbidden patterns from the allowed
patterns found in step~\eqref{grimsketch1}.
\begin{algorithm}[htp]
  \DontPrintSemicolon
  \KwIn{$S$, a list of classical patterns along with shadings $\sh_p$, ordered by the length of the patterns}
  \KwOut{A list consisting of $(p,\forb_p)$ where $p$ is a classical pattern and $\forb_p$ is a set of minimal forbidden shadings}
  \For {$(p,\sh_p) \in S$}
  {
    Let $\forb_p$ be the minimal shadings of $p$ that are not contained in any member of $\sh_p$\; \nllabel{algo:grimforb}
    \For{$R \in \forb_p$}
    {
      \If{$R$ is a consequence of some shading in $\forb_q$ for a pattern $q$ contained in $p$}
      {\nllabel{algo:grimcons1}
        Remove $R$ from $\forb_p$\; \nllabel{algo:grimcons2}
      }
    }
  }
  \caption{$\gen$: The second step in $\bisc$ generates the forbidden patterns} \label{algo:grim}
\end{algorithm}
To explain lines~\ref{algo:grimcons1}--\ref{algo:grimcons2} in the algorithm assume $p = 1243$ and the shading, $R$, on the left in Fig.~\ref{fig:forbEx} is one of the shadings
in $\forb_p$ on line~\ref{algo:grimforb}.
Assume also that earlier in the outer-most for-loop, we generated $q = 12$ with the shading $R'$
on the right in Fig.~\ref{fig:forbEx}. Then the shading $R$ is removed from $\forb_p$ on line~\ref{algo:grimcons2}
since any permutation containing $(p,R)$, also contains $(q,R')$.
\begin{figure}[htp]
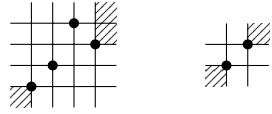

  \begin{center}
    \mpattern{scale=\pattdispscale}{ 4 }{ 1/1, 2/2, 3/4, 4/3 }{0/0, 4/3, 4/4 }\qquad
    \mpattern{scale=\pattdispscale}{ 2 }{ 1/1, 2/2 }{0/0, 2/2 }
    \caption{The mesh patterns $(p,R)$ and $(q,R')$}
    \label{fig:forbEx}
  \end{center}
\end{figure}

We now define the full algorithm as $\bisc(A,m) = \gen(\mine(A,m))$. The next two theorems
show that the output of $\bisc$ is correct, in the sense that it will never
output patterns contained in a permutation in the input, and can give
a proof of the description if a bound is known on the length of the patterns.

\begin{theorem} \label{thm:biscMainThm}
  Let $A$ be any set of permutations. Then for all positive integers $N \geq n$ and $m$
  \[
    A_{\leq n} \subseteq \Av(\bisc(A_{\leq N},m))_{\leq n}.
  \]
  Furthermore, if the set $A$ is defined in terms of a finite list of patterns (classical or not)
  whose longest pattern has length $k$, and if $N \geq n \geq k$, $m \geq k$, then there is equality
  between the two sets above.
\end{theorem}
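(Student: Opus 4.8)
The plan is to establish the two assertions separately: the inclusion $A_{\le n}\subseteq\Av(\bisc(A_{\le N},m))_{\le n}$ for an arbitrary $A$, and then, under the finite-basis hypothesis, the reverse inclusion, so that together they give equality. Throughout I would use the elementary monotonicity of shadings: if $R_1\subseteq R_2$, then any occurrence witnessing $(p,R_2)$ in a permutation also witnesses $(p,R_1)$, so $(p,R_2)$ being contained implies $(p,R_1)$ being contained.

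For the first inclusion I would argue by contradiction. Suppose some $\pi\in A_{\le n}$ contains an output pattern $(p,R)$ with $R\in\forb_p$; note $|p|\le m$. Since $N\ge n$ we have $\pi\in A_{\le N}$, so $\pi$ is among the permutations processed by $\mine(A_{\le N},m)$, and $(p,R)$ occurs in $\pi$. Lemma~\ref{lem:R'} then yields a pattern $(p,R')$ in the output of $\mine$ with $R\subseteq R'$, i.e.\ some $R'\in\sh_p$ with $R\subseteq R'$. But line~\ref{algo:grimforb} of Algorithm~\ref{algo:grim} builds $\forb_p$ from shadings not contained in any member of $\sh_p$, and the removal steps only shrink $\forb_p$; hence $R$ is contained in no member of $\sh_p$, contradicting $R\subseteq R'$. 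So no permutation of $A_{\le n}$ contains an output pattern.

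For the reverse inclusion under $A=\Av(B)$ with $B$ finite of maximum length $k$ and $N\ge n\ge k$, $m\ge k$, I would prove the contrapositive: if $|\pi|\le n$ and $\pi\notin A$, then $\pi$ contains a pattern of $\bisc(A_{\le N},m)$. Such a $\pi$ contains some $(b,R_b)\in B$ with $|b|\le k\le m$, so the classical pattern $b$ is processed by both algorithms. The crucial point is that $R_b$ is contained in no allowed shading of $b$: if $R_b\subseteq R'$ for some $R'\in\sh_b$, then $(b,R')$ occurs in some $\sigma\in A_{\le N}\subseteq\Av(B)$, whence by monotonicity $(b,R_b)$ occurs in $\sigma$, contradicting $\sigma\in\Av(B)$. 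The family of shadings of $b$ not contained in any member of $\sh_b$ is upward closed, so $R_b$ contains one of its minimal elements $R_0$, which is exactly a shading placed in $\forb_b$ on line~\ref{algo:grimforb}; since $R_0\subseteq R_b$ and $\pi$ contains $(b,R_b)$, the permutation $\pi$ also contains $(b,R_0)$. Here the hypotheses $m\ge k$ (so $b$ is examined) and $N\ge n\ge k$ (so the window $A_{\le N}$ is large enough to determine the allowed shadings of the length-$\le k$ patterns of $B$) are what make the argument go through.

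The step I expect to be the main obstacle is the consequence-removal in lines~\ref{algo:grimcons1}--\ref{algo:grimcons2} of Algorithm~\ref{algo:grim}, the only place an output pattern can be deleted. If $R_0$ survives into the final $\forb_b$, then $(b,R_0)$ is an output pattern contained in $\pi$ and we are done. If $R_0$ is deleted, it is because it is a consequence of some $R_q\in\forb_q$ for a pattern $q$ contained in $b$; then $|q|<|b|$, so $q$ is processed earlier in the length-ordered outer loop and its $\forb_q$ is already final, meaning $(q,R_q)$ genuinely lies in the output. By the definition of consequence, every permutation containing $(b,R_0)$ contains $(q,R_q)$, so $\pi$ contains the output pattern $(q,R_q)$. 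The delicate part is precisely this: to argue that the removal never discards a pattern needed to exclude some $\pi\notin A$, which holds because $\gen$ finalizes $\forb_q$ for all shorter $q$ before it can remove anything from $\forb_b$. Combining the two inclusions gives the asserted equality of $A_{\le n}$ and $\Av(\bisc(A_{\le N},m))_{\le n}$.
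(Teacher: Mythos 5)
Your proof is correct and takes essentially the same route as the paper's: the inclusion follows from Lemma~\ref{lem:R'} exactly as in the paper, and the equality follows from the same contrapositive argument that a forbidden basis pattern's shading is contained in no allowed shading of $\sh_b$ and hence lies above some minimal shading placed in $\forb_b$ on line~\ref{algo:grimforb}. The one point worth noting is that where the paper dismisses the consequence-removal step with a ``without loss of generality,'' you spell out why a removed shading is harmless---it is a consequence of a pattern $(q,R_q)$ whose $\forb_q$ was already finalized earlier in the length-ordered loop, and that output pattern still excludes $\pi$---which is a tightening of the paper's argument rather than a different approach.
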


\begin{proof}
  To prove the subset relation let $\pi$ be a permutation in $A_{\leq n}$ and $(p,R)$ be a mesh pattern of length at most $m$
  contained in $\pi$. By Lemma~\ref{lem:R'} there is a pattern $(p,R')$ in the output from
  $\mine(A_{\leq N},m)$ such that $R \subseteq R'$.
  This implies that $R$ is not one of the shadings in $\forb_p$ and therefore $(p,R)$
  is not in the output of $\bisc(A_{\leq n},m)$, and therefore $\pi$ is in $\Av(\bisc(A_{\leq N},m))_{\leq n}$.
  
  To prove the equality of the sets, let $\pi$ be a permutation that is not in the set on the left because it contains
  a mesh pattern $(p, R)$ from the list defining $A$. If $p$ never occurs as a classical pattern in a
  permutation in $A_{\leq N}$ then $(p,\emptyset)$ is in the output of $\mine(A_{\leq N},m)$ which implies
  that $(p,\emptyset)$ is in the output of $\bisc(A_{\leq N},m)$. This implies that $\pi$ is not in the set
  on the right. If, however, $p$ occurs in some permutations in $A_{\leq N}$, then every occurrence of a mesh
  pattern $(p, R')$ must satisfy $R \nsubseteq R'$. This implies that when $\forb_p$ is created in
  line~\ref{algo:grimforb} in Algorithm~\ref{algo:grim}, it contains $R$ or a non-empty subset, $R''$, of it.
  Without loss of generality, we assume that $R''$ is not removed due to redundancy in line~\ref{algo:grimcons2}.
  This implies that $(p, R'')$ is in the output of $\bisc(A_{\leq N},m)$, so $\pi$ is not in the set on the right.
\end{proof}

We can strengthen the previous theorem if the input $A$ to $\bisc$ is defined in terms of classical patterns.

\begin{theorem} \label{thm:classical}
  Let $A$ be a set of permutations defined by the avoidance of a (possibly infinite)
  list $P$ of classical patterns. Then, the output of $\bisc(A_{\leq n},m)$ for any $n,m$ will consist only
  of classical patterns.
  If the longest patterns in the list $P$ have length $k$, then
  $A = \Av(\bisc(A_{\leq n},m))$ for any $n,m \geq k$.
\end{theorem}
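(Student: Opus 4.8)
The plan is to prove the two assertions separately, using Lemma~\ref{lemma:steincl} for the first and a reduction to a finite basis for the second.

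For the claim that every output pattern is classical, I would fix a classical pattern $p$ of length at most $m$ and examine its shading record $\sh_p$ produced by $\mine(A_{\leq n},m)$. Since $A$ is the avoidance set of a list of classical patterns, Lemma~\ref{lemma:steincl} tells me that if $\sh_p \neq \emptyset$ then $\sh_p$ consists only of the full shading of $p$. I then read off $\forb_p$ from line~\ref{algo:grimforb} of Algorithm~\ref{algo:grim}: if $\sh_p=\emptyset$ the empty shading is the unique minimal shading contained in no member of $\sh_p$, so $\forb_p=\{\emptyset\}$, the classical pattern $p$; and if $\sh_p$ is the full shading then every shading is contained in it, so $\forb_p=\emptyset$ and $p$ contributes nothing. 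Because the redundancy step in lines~\ref{algo:grimcons1}--\ref{algo:grimcons2} only deletes shadings, the output is a set of classical patterns $(p,\emptyset)$, for all $n$ and $m$.

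For the equality I would first replace the possibly infinite $P$ by the finite basis $B$ of $A$, namely the minimal permutations outside $A$. As $A=\Av(P)$ is closed downward under containment, each $\beta\in B$ contains some $p\in P$, and minimality forces $\beta=p$; hence $B\subseteq P$, every basis element has length at most $k$, and $A=\Av(B)$. Writing $O$ for the classical output of $\bisc(A_{\leq n},m)$, the task is to show $\Av(O)=\Av(B)$. One inclusion comes from $B\subseteq O$: each $\beta\in B$ lies outside $A$, so it occurs in no member of $A_{\leq n}$, giving $\sh_\beta=\emptyset$ and $\forb_\beta=\{\emptyset\}$; and $\beta$ survives the redundancy step because each proper sub-pattern $q$ of $\beta$ lies in $A$ (by minimality) and has length below $k\leq n$, so $q\in A_{\leq n}$ occurs in itself, forcing $\forb_q=\emptyset$ so that no shorter forbidden pattern can prune $\beta$. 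Thus $\Av(O)\subseteq\Av(B)=A$. For the reverse inclusion I take $\sigma\in A$ and $p\in O$; since $p$ is classical with $\sh_p=\emptyset$ it does not occur in $A_{\leq n}$, so if $|p|\leq n$ then $p\notin A$, whence $p$ contains a basis element and $\sigma$, avoiding $B$, avoids $p$.

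The hard part is the length bound $|p|\leq n$ invoked in the last step. When $n\geq m$ it holds automatically, since every output pattern has length at most $m\leq n$; the argument then simplifies to $O=B$ modulo redundancy and gives $A=\Av(O)$ directly. The subtle regime is $m>n$: now every classical pattern of length exceeding $n$ has $\sh_p=\emptyset$ and is raw-forbidden on line~\ref{algo:grimforb}, yet a pattern of length $n+1$ that lies in $A$ has all of its proper sub-patterns in $A$ and therefore unforbidden, so the redundancy step in lines~\ref{algo:grimcons1}--\ref{algo:grimcons2} finds nothing to prune it with. Pinning down how the output length is controlled --- equivalently, ensuring that no in-class permutation is excluded by a spurious long pattern --- is the crux, and the clean way to secure the equality is to work in the range $n\geq m\geq k$, where the output is exactly the basis $B$.
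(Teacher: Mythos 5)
Your argument for the first assertion is exactly the paper's intended route: the paper's entire proof of this theorem is the single line ``This follows from Lemma~\ref{lemma:steincl}'', and your case analysis ($\sh_p=\emptyset$ forces $\forb_p=\{\emptyset\}$ on line~\ref{algo:grimforb}, while $\sh_p$ equal to the full shading forces $\forb_p=\emptyset$) is the correct way to flesh that line out. Your proof of the equality in the regime $n\geq m\geq k$, via the finite basis $B\subseteq P$ of minimal permutations outside $A$, is also correct and is considerably more careful than anything written in the paper.

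More importantly, the difficulty you isolate in the regime $m>n$ is not a defect of your write-up but a genuine flaw in the theorem as stated. Concretely, take $A=\Av(21)$, so $k=2$, and run $\bisc(A_{\leq 2},3)$: the only nonempty classical patterns occurring in members of $A_{\leq 2}=\{\epsilon,1,12\}$ are $1$ and $12$, each with full shading, so on line~\ref{algo:grimforb} every classical pattern of length $3$ acquires $\forb_p=\{\emptyset\}$; the redundancy step of lines~\ref{algo:grimcons1}--\ref{algo:grimcons2} removes those patterns containing $21$, but $123$ survives, since its proper subpatterns $1$ and $12$ have empty $\forb$. The output is therefore $\{21,123\}$, and $\Av(21,123)=\{\epsilon,1,12\}$ is finite while $A$ is infinite, so the claimed equality $A=\Av(\bisc(A_{\leq n},m))$ fails for $n=2$, $m=3\geq k$. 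This is precisely the mechanism you describe: a pattern of length $n+1$ lying in $A$ is raw-forbidden and nothing can prune it. So the theorem requires the additional hypothesis $n\geq m$ (or its conclusion must be truncated at length $n$, as in Theorem~\ref{thm:biscMainThm}); under that hypothesis your proof is complete, and the paper's one-line proof silently assumes it.
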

\begin{proof}
  This follows from Lemma~\ref{lemma:steincl}.
\end{proof}

Some properties require us to look at very large permutations to discover patterns. For
example, one must look at permutations of length $8$ in the set
\begin{equation*}
  \Av \left( \mpattern{scale=\pattdispscale}{ 2 }{ 1/1, 2/2 }{0/0,1/1,2/2},\mpattern{scale=\pattdispscale}{ 2 }{ 1/1, 2/2 }{0/2,1/1,2/0} \right) = \epsilon, 1,21,321,2341,4123,4321, \dotsc.
\end{equation*}
we considered above
to see that the mesh pattern $\mpattern{scale=\patttextscale}{ 2 }{ 1/1, 2/2 }{0/0,0/2,1/0,2/0,2/1,2/2}$ is allowed.
But as examples in the next section show, it often suffices to look at permutations of length $m+1$ when
searching for mesh patterns of length $m$.

For some input sets $A$, there is redundancy in the output $\bisc(A,m)$, in the sense that
some patterns can be omitted without changing $\Av(\bisc(A,m))$. The implementation of the
algorithm has a pruning step to remove redundancies like this.
This is never a problem in the following applications, so we will omit further details.

\subsection{Applications of $\bisc$}
\label{subsec:appl}

The $\bisc$ algorithm can, as was noted in the introduction, rediscover several known
theorems, some of which we listed in Table~\ref{tab:props}. This section presents new results and conjectures
first discovered by $\bisc$. At least one of them, Conjecture~\ref{conj:tableaux32}, would perhaps have
been nearly impossible to discover without some automation due to how complicated
the patterns in the statement are.

\paragraph{The dihedral and alternating subgroups.}
One of the most well-known subgroups of the symmetric group is the dihedral group $D_n$.
Let $D$ denote the union of $D_n$ for all $n$.
If run $\bisc$ on the union of the dihedral subgroups of permutations of length at most $4$
and search for patterns of length $4$, i.e., compute $\bisc(D_{\leq 4},4)$, the output
is a list of $16$ classical patterns given in the following conjecture.
\begin{conjecture} \label{conj:dihedral}
  The union of the dihedral subgroups consists of permutations avoiding the classical
  patterns $3142$, $1342$, $2314$, $2134$, $3241$, $4231$, $3421$, $4213$, $2413$, $1324$
  $3124$, $1423$, $2431$, $4312$, $4132$, $1243$.
\end{conjecture}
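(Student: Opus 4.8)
The plan is to translate membership in the dihedral group into a purely combinatorial condition on adjacent entries, and then show that exactly the $16$ length-$4$ patterns detect the failure of that condition. First I would prove the characterization that a permutation $\pi$ of length $n$ lies in $D_n$ if and only if every pair of adjacent entries satisfies $\pi(i+1)-\pi(i)\equiv\pm1\pmod n$ for all $i$. One direction is immediate: writing the rotation $r^k$ in one-line notation as $(k+1)(k+2)\cdots n\,1\,2\cdots k$ shows all consecutive differences are $\equiv+1$, and the reflections are its reverses, giving all differences $\equiv-1$. For the converse, if all differences are $\equiv\pm1\pmod n$ then the signs cannot change (a $+1$ immediately followed by a $-1$, or vice versa, would force $\pi(i+2)=\pi(i)$, contradicting injectivity), so the differences are uniformly $+1$, making $\pi$ a rotation, or uniformly $-1$, making $\pi$ a reflection. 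Equivalently, $\pi$ is dihedral iff $\pi(1)\pi(2)\cdots\pi(n)$ traces a Hamiltonian path on the cycle $C_n$ whose vertices are $1,\dots,n$ and whose edges join cyclically adjacent values. Specializing to $n=4$, the eight elements of $D_4$ are exactly the permutations of $S_4$ that are \emph{not} in the conjecture's list, and the $16$ forbidden patterns are precisely those length-$4$ permutations in which some pair of adjacent entries differs by $2$ (the only non-edge gap in $C_4$).

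Next I would establish the \emph{only if} inclusion: every $\pi\in D_n$ avoids all $16$ patterns, i.e.\ each of its length-$4$ sub-patterns lies in $D_4$. Here the run structure does the work. A rotation is the concatenation of two increasing runs, the first consisting of the top values and the second of the bottom values; choosing any four positions selects an increasing block of high values followed by an increasing block of low values, which flattens to some rotation in $D_4$. The reflection case is identical after replacing ``increasing'' by ``decreasing''. Hence no dihedral permutation contains a forbidden pattern.

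The \emph{if} inclusion is the crux, and I would argue it by contraposition. If $\pi\notin D_n$ then by the characterization there are adjacent positions $i,i+1$ with $g:=|\pi(i+1)-\pi(i)|\in\{2,\dots,n-2\}$. Because $g\geq2$ there is a value strictly between $\pi(i)$ and $\pi(i+1)$, and because $g\leq n-2$ the interval of values they span omits at least one element of $\{1,\dots,n\}$; let $c$ be such an interior value and $d$ such an exterior value, occurring at positions distinct from $i,i+1$. Flattening the four positions carrying $\pi(i),\pi(i+1),c,d$ yields a length-$4$ pattern in which the images of $\pi(i)$ and $\pi(i+1)$ are still adjacent (since $i,i+1$ are adjacent in $\pi$) and have exactly one chosen value between them, so their ranks differ by $2$; by the $n=4$ description above, this pattern is one of the $16$ forbidden ones, so $\pi$ fails to avoid them.

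The main obstacle I anticipate is precisely this last step: forbiddenness is genuinely a length-$4$ phenomenon---because $D_3=S_3$, no length-$3$ pattern is ever forbidden---so a single ``bad'' adjacent pair of $\pi$ is only a length-$2$ certificate and must be promoted to a length-$4$ witness. The delicate point is to pad the bad pair with exactly one interior value and one exterior value, so that the rank-gap of the offending pair is forced to be exactly $2$ (and not $3$); the bounds $2\leq g\leq n-2$ are exactly what guarantees both padding values exist. Everything else---the characterization and the run argument for the \emph{only if} direction---is routine, so I would devote the bulk of the write-up to making this construction precise and checking that the flattened four-element pattern genuinely lands in $S_4\setminus D_4$.
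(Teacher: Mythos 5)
Your proposal is correct, but there is no proof in the paper to compare it with: the statement is labelled a conjecture precisely because the paper never proves it. The paper's route is purely algorithmic---it computes $\bisc(D_{\leq 4},4)$ and reports the $16$ output patterns, and its general guarantees cannot close the gap: Theorem~\ref{thm:biscMainThm} only certifies the containment $D_{\leq 4} \subseteq \Av(P)_{\leq 4}$ for the lengths actually examined, and Theorem~\ref{thm:classical} cannot be invoked because one does not know a priori that $D$ is defined by classical pattern avoidance (that is exactly what is being conjectured). Your argument supplies what is missing: the characterization of $D_n$ as the permutations whose adjacent entries differ by $\pm 1 \pmod n$ (with the sign-change argument ruling out mixed differences), the run decomposition showing that every length-$4$ subpattern of a rotation or reflection again lies in $D_4$, and the padding construction that promotes a bad adjacent pair with gap $g$, $2 \leq g \leq n-2$, to a length-$4$ witness by adjoining one interior and one exterior value, forcing the flattened gap to be exactly $2$. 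The two delicate points---that the bad pair stays adjacent after restricting to four positions (no position lies strictly between $i$ and $i+1$), and that the bounds on $g$ guarantee both padding values exist---are handled correctly, and the degenerate cases $n \leq 3$ cause no trouble since $D_n = S_n$ there, matching the fact that length-$4$ patterns impose no condition on shorter permutations. A full write-up should also include the finite verification that the $16$ listed patterns are exactly $S_4 \setminus D_4$, i.e.\ the length-$4$ permutations containing an adjacent pair of entries differing by $2$; with that, your argument upgrades the conjecture to a theorem, which is strictly more than the paper claims (and would answer the question the paper attributes to Atkinson and Beals of exhibiting a basis for the dihedral union).
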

Atkinson and Beals~\cite{subgrp} classified which subgroups of the symmetric group have descriptions
in terms of classical patterns but do not give the basis for the dihedral group.

Another well-known class of subgroups are the alternating subgroups. Let $A$ be
the union of all these subgroups. Running $\bisc(A_{\leq n},m)$ for any $n \geq m$
always outputs fully shaded mesh patterns. This seems to be the only way to describe
these subgroups with mesh patterns:
\begin{conjecture} \label{conj:alternating}
  Let $A$ be the union of the alternating subgroups, and let $P$ be a basis of
  patterns such that $A = \Av(P)$. Then
  \[
    P = \{ (p, \{0,\dotsc,n\} \times \{0,\dotsc,n\}) \colon p \in S_n \setminus A \text{ for some } n \}.
  \]
\end{conjecture}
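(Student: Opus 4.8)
The plan is to determine exactly which mesh patterns are eligible to appear in a basis $P$ for $A$ (the even permutations) and then show that eligibility, together with the requirement $A = \Av(P)$ and minimality, forces $P$ to be the collection of all fully shaded odd permutations. I would work from the two defining inclusions of a basis: $A \subseteq \Av(P)$, which says no even permutation contains any pattern of $P$, equivalently each pattern of $P$ occurs only in odd permutations; and $\Av(P) \subseteq A$, which says every odd permutation contains some pattern of $P$. Since every mesh pattern $(p,R)$ is contained in the permutation $p$ itself (there are no extra points to violate any shading), the first inclusion already forces the classical part $p$ of each pattern in $P$ to be odd.

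The central claim I would prove is: a mesh pattern $(p,R)$ occurs only in odd permutations if and only if $p$ is odd and $R$ is the full shading $\{0,\dots,n\}\times\{0,\dots,n\}$. One direction is easy. A fully shaded $(p,R)$ occurs in $\pi$ if and only if $\pi = p$, because a full shading forbids any point lying outside the occurrence; hence when $p$ is odd this pattern occurs in the single odd permutation $p$ and nowhere else. This also shows, conversely, that the displayed set is indeed a valid basis, since avoiding all fully shaded odd permutations excludes precisely the odd permutations.

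The key step — and the main obstacle — is the remaining direction: if $R$ is \emph{not} full, then $(p,R)$ occurs in some even permutation and so cannot lie in $P$. Here I would use a parity-flipping construction. Choose an unshaded cell $(i,j)$ of $R$ and, starting from an occurrence of $p$, form two permutations $\sigma$ and $\sigma'$ by inserting two extra points into that cell, taken in increasing relative order for $\sigma$ and decreasing for $\sigma'$. Both $\sigma$ and $\sigma'$ contain $(p,R)$, because the only added points lie in the unshaded cell $(i,j)$ and therefore violate no shading constraint, while the original points still flatten to $p$. Since $\sigma$ and $\sigma'$ differ only by transposing the values of the two inserted points, they have opposite parity, so one of them is even. (If instead $p$ is even, the even permutation $p$ already contains $(p,R)$.) Controlling parity is the delicate point: inserting a single point adds a number of inversions fixed by the cell and so need not change parity, which is precisely why I insert a pair whose relative order I am free to choose.

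Finally I would assemble the two inclusions. The eligibility analysis shows every pattern of $P$ is a fully shaded odd permutation. For $\Av(P) \subseteq A$, each odd permutation $\tau$ must contain some pattern of $P$; but a fully shaded pattern is contained in $\tau$ only when it equals $\tau$, so $\tau$ itself must appear in $P$. Hence $P$ contains every fully shaded odd permutation, and by the easy direction this already yields $\Av(P) = A$. Minimality is immediate, since deleting the pattern for a particular odd $\tau$ would return $\tau$ to $\Av(P)$. Combining these, any basis $P$ must equal $\{ (p, \{0,\dots,n\}\times\{0,\dots,n\}) \colon p \in S_n \setminus A \text{ for some } n \}$, as claimed.
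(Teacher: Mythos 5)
Your argument is correct, but there is nothing in the paper to compare it against: this statement is Conjecture~\ref{conj:alternating}, which the paper leaves unproven, offering only the computational evidence that $\bisc(A_{\leq n},m)$ always returns fully shaded mesh patterns. Your parity-flipping lemma supplies exactly the missing ingredient. Since any mesh pattern $(p,R)$ is contained in the permutation $p$ itself, $A \subseteq \Av(P)$ forces the classical part of every pattern in $P$ to be odd; and if $R$ omits some cell $(i,j)$, inserting a pair of points into that cell in either relative order yields two permutations of opposite parity, both containing $(p,R)$, so some \emph{even} permutation contains $(p,R)$. Hence a mesh pattern occurs exclusively in odd permutations precisely when it is a fully shaded odd permutation. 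Combined with the observation that a fully shaded pattern of length $n$ is contained only in the one permutation it equals, the inclusion $\Av(P) \subseteq A$ forces $P$ to contain the fully shaded copy of every odd permutation, and $A \subseteq \Av(P)$ forces $P$ to contain nothing else. Note that your argument never actually uses minimality: \emph{any} set $P$ of mesh patterns with $A = \Av(P)$ must equal the displayed set, so the basis is unique, which is a slightly stronger statement than the conjecture. In short, your proof upgrades the conjecture to a theorem within the language of mesh patterns. The one caveat is scope: the paper's phrase ``basis of patterns'' could conceivably admit richer decorations, such as the marked mesh patterns of Conjecture~\ref{conj:tableaux32}, and your construction says nothing about those; but since the paper explicitly frames the claim as being about descriptions ``with mesh patterns,'' your proof addresses the intended statement.
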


\paragraph{Forbidden shapes in Young tableaux.}
There is a well-known bijection, the Robinson-Schensted-Knuth-correspondence (RSK) between permutations of length $n$
and pairs of Young tableaux of the same shape. Let $A$ be the set of permutations whose image tableaux are hook-shaped. The output of $\bisc(A_{\leq 5},4)$ is the following four patterns.
\[
  \mpattern{scale=\pattdispscale}{ 4 }{ 1/2, 2/1, 3/4, 4/3 }{} \qquad
  \mpattern{scale=\pattdispscale}{ 4 }{ 1/3, 2/4, 3/1, 4/2 }{} \qquad
  \mpattern{scale=\pattdispscale}{ 4 }{ 1/3, 2/1, 3/4, 4/2 }{2/2} \qquad
  \mpattern{scale=\pattdispscale}{ 4 }{ 1/2, 2/4, 3/1, 4/3 }{2/2}
\]
Here, we have rediscovered an observation made by Atkinson~\cite[Proof of Lemma 9]{A}.
See also Albert~\cite{YoungClasses} for connections between classical patterns
and Young tableaux.
We can view the property of being a hook-shaped tableau as not containing
the shape {\tiny\yng(2,2)}. It is natural to ask if we can, in general, describe the permutations corresponding to tableaux not containing a general
shape $\lambda$. E.g., let $A$ be the permutations whose tableaux avoid {\tiny\yng(3,2)}.
The output of $\bisc(A_{\leq 6},5)$ is $25$ mesh patterns that can be simplified to give the
following.
\begin{conjecture} \label{conj:tableaux32}
  The permutations whose tableaux under the RSK-map avoid the shape {\tiny\yng(3,2)}
  are precisely the avoiders of the marked mesh patterns (\cite[Definition 4.5]{Unific}) below.
  \[
    \mmpatterna{scale=\pattdispscale}{ 4 }{ 1/2, 2/1, 3/4, 4/3 }{}{0/0/1/1/{},4/4/5/5/{}} \qquad
    \mmpatternb{scale=\pattdispscale}{ 4 }{ 1/3, 2/4, 3/1, 4/2 }{}{1/3/2/4/1,3/1/4/2/{}} \qquad
    \mmpattern{scale=\pattdispscale}{ 4 }{ 1/3, 2/1, 3/4, 4/2 }{2/2}{0/0/2/1/1,3/4/5/5/{}} \qquad
    \mmpattern{scale=\pattdispscale}{ 4 }{ 1/2, 2/4, 3/1, 4/3 }{2/2}{0/0/1/2/1,4/3/5/5/{}}
  \]
\end{conjecture}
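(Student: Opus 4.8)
The plan is to bypass the $\bisc$ output entirely and characterize the set $A$ through the Robinson--Schensted--Knuth shape, then match that characterization against the four marked patterns. Write $\lambda(\pi)$ for the common shape of the two tableaux attached to $\pi$. Since containment of partitions is componentwise and {\tiny\yng(3,2)} has only two rows, $\lambda(\pi)$ contains {\tiny\yng(3,2)} exactly when $\lambda_1(\pi)\geq 3$ and $\lambda_2(\pi)\geq 2$. Hence $\pi\in A$ iff $\lambda_1(\pi)\leq 2$ or $\lambda_2(\pi)\leq 1$, and the whole statement reduces to the biconditional
\[
  \lambda_1(\pi)\geq 3 \ \text{ and }\ \lambda_2(\pi)\geq 2 \iff \pi \text{ contains one of the four marked mesh patterns.}
\]

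First I would assemble the two facts on the left from standard RSK theory together with the hook description proved just above. By Schensted's theorem $\lambda_1(\pi)$ is the length of the longest increasing subsequence, so $\lambda_1(\pi)\geq 3$ is equivalent to $\pi$ containing the classical pattern $123$. For the second row, the condition $\lambda_2(\pi)\geq 2$ says precisely that $\lambda(\pi)$ is not a hook; by the rediscovery of Atkinson~\cite{A} established immediately before the conjecture, this is equivalent to $\pi$ containing one of the four shape-{\tiny\yng(2,2)} patterns $2143$, $3412$, $(3142,\{(2,2)\})$, $(2413,\{(2,2)\})$. The four marked patterns in the conjecture are built on exactly these four underlying classical patterns $2143,3412,3142,2413$, each decorated with shadings and with markings (cells forced to contain a point, in the sense of \cite{Unific}); the design principle to confirm is that each marking encodes precisely the one extra increasing step that promotes $\lambda_1\geq 2$ to $\lambda_1\geq 3$ while the shaded base pattern keeps witnessing $\lambda_2\geq 2$.

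For the easy implication ($\Leftarrow$), suppose $\pi$ contains one of the marked patterns $M_i$ with base $p_i$. The shadings carried by $M_i$ include those of the corresponding hook-description pattern (in particular the centre cell $(2,2)$ is shaded in the patterns built on $3142$ and $2413$), so the occurrence already certifies that $\pi$ is not hook-shaped, giving $\lambda_2(\pi)\geq 2$. The marked point then extends a monotone pair of the base occurrence to an increasing subsequence of length three, so by Schensted $\lambda_1(\pi)\geq 3$, and therefore $\lambda(\pi)\supseteq$ {\tiny\yng(3,2)}. The only care needed here is to verify, for each $i$ separately, that the required marked point genuinely combines with the base occurrence into an increasing triple; this is exactly where the placement of the markings matters.

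The hard part will be the forward implication ($\Rightarrow$): from the two \emph{global} facts $\lambda_1\geq 3$ and $\lambda_2\geq 2$ I must exhibit a single \emph{local} occurrence of one specific marked pattern with all of its shadings respected. The natural route is to fix a minimal (say leftmost-then-bottommost) occurrence of one of the four shape-{\tiny\yng(2,2)} patterns supplied by the non-hook condition, then use $\lambda_1\geq 3$ to produce an increasing triple and show it can be aligned with that occurrence into one of the $M_i$; minimality of the chosen occurrence is what should force the surrounding cells to be empty, matching the shadings. I expect the genuine obstacle to be twofold: first, proving that the increasing triple can always be chosen to overlap the shape-{\tiny\yng(2,2)} occurrence in the prescribed way, since a priori the witness for $\lambda_1\geq 3$ might live far from every {\tiny\yng(2,2)}-pattern; and second, confirming that these four marked patterns---neither more nor fewer---capture every relative position that can occur. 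I would attack the first point with the growth-diagram (local-rules) form of RSK, tracing how the cells $(1,3)$ and $(2,2)$ of the shape are created in order to read off actual permutation entries in the needed configuration, and the second by a finite case analysis over the possible interleavings of the triple with the four base patterns---precisely the sort of bounded check that makes the statement tractable and that $\bisc$ was able to surface in the first place.
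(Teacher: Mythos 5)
There is an important mismatch of expectations here: the paper contains \emph{no proof} of this statement. It is presented as Conjecture~\ref{conj:tableaux32}, obtained by running $\bisc(A_{\leq 6},5)$ and hand-simplifying the resulting $25$ mesh patterns; the only evidence offered is agreement on permutations of bounded length (Theorem~\ref{thm:biscMainThm} guarantees one inclusion for such output, never the equality). So there is no ``paper's proof'' to compare against, and a completed argument along your lines would be new mathematics settling an open conjecture, not a reproduction of anything in the paper.

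Judged on its own terms, your proposal contains a correct and genuinely useful reduction, but it is not a proof, and the missing piece is precisely the substance of the conjecture. The reformulation is sound: diagram containment is componentwise, so the tableau shape contains {\tiny\yng(3,2)} iff $\lambda_1(\pi)\geq 3$ and $\lambda_2(\pi)\geq 2$; the first condition is containment of $123$ by Schensted, and the second is containment of one of $2143$, $3412$, $(3142,\{(2,2)\})$, $(2413,\{(2,2)\})$ by the Atkinson hook characterization quoted just before the conjecture. Your reading of the marked-region semantics (at least one point in the union of the marked regions) is the one consistent with the paper's remark that the first pattern corresponds to $9$ classical patterns, and the backward implication then reduces to a finite check that each marked cell completes a monotone pair of the base occurrence to an increasing triple. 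The genuine gap is the forward implication, exactly where you place it: from the \emph{global} facts that a $123$ occurs somewhere and a shape-{\tiny\yng(2,2)} witness occurs somewhere (possibly disjointly, possibly far apart), you must manufacture a single \emph{local} occurrence of one of the four marked patterns with all shading constraints intact. Note in particular that for the bases $3142$ and $2413$ the cell $(2,2)$ of the chosen occurrence must be empty, so an extremal (``leftmost-then-bottommost'') choice made to secure the shading can destroy the alignment with the increasing triple, and an occurrence chosen to meet the triple can violate the shading; your plan never resolves this tension, it only names it, together with candidate tools (growth diagrams, case analysis over interleavings) that are not applied. As it stands, the proposal is a plausible program for proving the conjecture; until that direction is carried out, the statement remains what the paper says it is --- a conjecture.
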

The meaning of a marked region is that it should contain at least one point, and therefore
the first pattern above corresponds to $9$ classical patterns.
Crites et al.~\cite{CPW} showed that if a permutation $\pi$ contains a separable classical pattern
$p$, then the tableaux shape of $p$ is contained in the tableaux shape of $\pi$. The observation of Atkinson and Beals
and the proposition above suggest a more general result if one considers mesh
patterns instead of classical patterns.

\paragraph{Forbidden tree patterns.}
Pudwell et al.~\cite{LaraTalk} used the bijection between binary trees and $231$-avoiding permutations and
the $\bisc$ algorithm to discover a correspondence between tree patterns and
mesh patterns. This can be used to generate Catalan-many Wilf-equivalent sets of the form $\Av(231,p)$
where $p$ is a mesh pattern. The tree in Fig.~\ref{fig:bintree} gives the Wilf-equivalence between the
sets $\Av(231,654321)$ and $\Av(231,(126345,\{(1,6),(4,5),(4,6)\}))$.

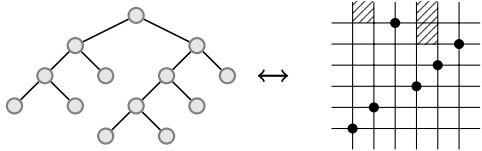
\begin{figure}[htb!]
  \begin{center}
    \begin{tikzpicture}[scale = 0.4, place/.style = {circle,draw = black!50,fill = gray!20,thick,inner sep=2pt}, auto]

      \node [place] (1) at (0,2)     {};
      \node [place] (2) at (2,1)	     {};
      \node [place] (3) at (1,0)     {};
      \node [place] (4) at (0,-1)	     {};
      \node [place] (5) at (-2,1)	     {};
      \node [place] (6) at (-1,0)     {};
      \node [place] (7) at (-3,0)     {};
      \node [place] (8) at (-4,-1)     {};
      \node [place] (9) at (-2,-1)     {};
      \node [place] (10) at (3,0)     {};
      \node [place] (11) at (2,-1)     {};
      \node [place] (12) at (1,-2)     {};
      \node [place] (13) at (-1,-2)     {};

      \draw [-,semithick] 			(1) to (2);
      \draw [-,semithick] 			(2) to (3);
      \draw [-,semithick] 			(3) to (4);
      \draw [-,semithick] 			(1) to (5);
      \draw [-,semithick] 			(5) to (6);
      \draw [-,semithick] 			(5) to (7);
      \draw [-,semithick] 			(7) to (8);
      \draw [-,semithick] 			(7) to (9);
      \draw [-,semithick] 			(2) to (10);
      \draw [-,semithick] 			(3) to (11);
      \draw [-,semithick] 			(4) to (12);
      \draw [-,semithick] 			(4) to (13);

      \draw [<->, thick] (4,0) to (5,0);

      \node [] (x) at (5.5,0) [label = right:$\mpattern{scale=\pattdispscale}{ 6 }{ 1/1, 2/2, 3/6, 4/3, 5/4, 6/5 }{1/6,4/5,4/6}$] {};

    \end{tikzpicture}
    \caption{A binary tree and its corresponding mesh pattern, see~\cite{LaraTalk} }
    \label{fig:bintree}
  \end{center}
\end{figure}

\paragraph{Sorting with restricted stacks}
Cerbai, Claesson, and Ferrari~\cite{cerbai_stack_2020} introduce a generalization of
stack-sorting with two stacks in series by forcing the (sub) permutation on the first
stack to avoid a pattern. This paper was followed by Baril, Cerbai, Khalil, and Vajnovszki~\cite{BARIL2021106138}
where they consider the avoidance of several patterns. $\bisc$ can be used to conjecture
several descriptions of the sets of permutations that are sortable under the restrictions
derived in these papers. It can also conjecture new results, such as the following.

\begin{conjecture} \label{conj:stack}
  The set of permutations that are sortable under the restriction that the first stack
  avoids $1324$ are the permutations avoiding
  \[
  \mpattern{scale=\pattdispscale}{ 3 }{ 1/1, 2/3, 3/2 }{0/1, 0/2, 2/0} \qquad
  \mpattern{scale=\pattdispscale}{ 3 }{ 1/1, 2/3, 3/2 }{0/3, 1/2} \qquad
  \mpattern{scale=\pattdispscale}{ 5 }{ 1/4, 2/2, 3/3, 4/1, 5/5 }{} \qquad
  \mpattern{scale=\pattdispscale}{ 5 }{ 1/5, 2/2, 3/3, 4/1, 5/4 }{}
  \]
\end{conjecture}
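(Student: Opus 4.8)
The plan is to reduce the statement to a question about a single deterministic operator and then translate the failure condition into forbidden patterns on the input. Write $s$ for the ordinary (Knuth) stack-sorting operator and $s_{1324}$ for the operator realized by the first stack $H$, which greedily pushes the next entry onto $H$ whenever the content of $H$ (read from top to bottom) still avoids $1324$, and otherwise pops. Since the second stack is unrestricted, the device sorts $\pi$ exactly when $s\bigl(s_{1324}(\pi)\bigr)$ is the identity, and by Knuth's theorem this happens iff $s_{1324}(\pi) \in \Av(231)$. So the first task is to establish the clean reformulation that $\pi$ is sortable iff $s_{1324}(\pi) \in \Av(231)$; this is immediate once the greedy rule for $H$ is fixed, together with the observation that flushing $H$ at the end of the scan does not interfere with the reduction.

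Next I would analyze the dynamics of $s_{1324}$. The key local fact is a characterization of the forced pops: pushing an incoming value $x$ onto $H$ creates an occurrence of $1324$ precisely when the current content of $H$ contains, in top-to-bottom order, an occurrence of $213$ all of whose values exceed $x$, since $x$ can only play the role of the leading $1$. From this I would extract the invariant structure of the stack between forced pops and describe $s_{1324}(\pi)$ entry by entry as a function of the left-to-right minima of $\pi$ and of the positions at which the $213$-obstruction first appears. This is the technical heart of the argument and, I expect, the main obstacle: one must show that the greedy trajectory is governed by a bounded window of the input, so that each occurrence of $231$ in the output $s_{1324}(\pi)$ can be traced back to a bounded-length configuration in $\pi$.

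With the dynamics in hand, the third step is the bidirectional translation. In one direction I would show that if $\pi$ contains any of the four patterns then $s_{1324}(\pi)$ contains $231$: the two length-three mesh patterns on $132$ encode the generic situations in which $H$, unobstructed, reverses a $132$ into a $231$-creating configuration, the shadings recording exactly which regions must stay empty so that no intervening pop destroys the obstruction, while the two classical length-five patterns $42315$ and $52314$ encode the situations in which it is precisely the $1324$-restriction on $H$ that forces an untimely pop and thereby injects a $231$ into the output. In the converse direction I would argue that any occurrence of $231$ in $s_{1324}(\pi)$, pulled back through the stack, yields after the mesh-pattern case analysis an occurrence of one of the four patterns; the lengths three and five then come out of the window bound established in the previous step.

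Finally I would close the argument by confirming completeness, namely that no obstruction of length greater than five can arise. Since $\bisc(A_{\le 6},5)$ already returns exactly these four patterns, I would combine the structural window bound with Theorem~\ref{thm:biscMainThm}, which guarantees that the output patterns are never contained in a sortable permutation, so that the remaining content reduces to verifying that the governing window never needs to exceed length five. I expect this completeness bound, rather than either direction of the translation, to be the genuinely delicate point, because controlling how a single forced pop propagates through the subsequent greedy behaviour of $H$ is exactly where longer hidden obstructions could in principle lie.
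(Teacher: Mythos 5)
The first thing to note is that the paper does not prove this statement: it is presented explicitly as a conjecture, produced by running $\bisc$ on the set of sortable permutations, and no proof is given anywhere in the text. So there is no proof of record to compare yours against; what you have written is a plan of attack on an open problem, and it must be judged on whether it closes that problem. It does not. The reduction you start from --- $\pi$ is sortable iff $s_{1324}(\pi)$ avoids $231$ --- is correct and is the standard reduction of Cerbai, Claesson and Ferrari~\cite{cerbai_stack_2020}, and your local characterization of forced pops (pushing $x$ is illegal iff the stack holds a top-to-bottom occurrence of $213$ lying entirely above $x$ in value) is also right. But everything after that is a statement of intent rather than an argument. The ``bounded window'' claim --- that every occurrence of $231$ in $s_{1324}(\pi)$ pulls back to a configuration of bounded length in $\pi$, and that the bound is five --- is essentially the whole content of the conjecture, and you offer no argument for it beyond flagging it as the delicate point. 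In particular, the two length-five classical patterns $42315$ and $52314$ are not ``encoded'' by anything you establish: you would need to exhibit the sorting trajectories showing that each of the four patterns genuinely obstructs sortability, and, much harder, show that no further obstruction of any length exists.

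Second, your appeal to Theorem~\ref{thm:biscMainThm} to dispose of the direction ``sortable implies avoiding'' is a misreading of that theorem. The containment $A_{\leq n} \subseteq \Av(\bisc(A_{\leq N},m))_{\leq n}$ holds only for $n \leq N$; it guarantees that the output patterns are not contained in any permutation that was actually in the finite input --- here, sortable permutations of length at most $6$ (or whatever length was used) --- and says nothing about sortable permutations of greater length. The equality clause of that theorem requires knowing in advance that $A$ is defined by a finite list of patterns of bounded length, which is precisely what is at issue. So neither direction of the claimed equivalence is established by citing that theorem, and both remain open in your write-up; the proposal is a reasonable blueprint for how one might eventually prove the conjecture, but it is not a proof.
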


\paragraph{$1$-quicksortable permutations.}
The following sorting method was proposed by Claesson~\cite{AC} as a single
pass of quicksort (See, e.g., Hoare~\cite{Hoare}): If the input permutation is empty, return the
empty permutation. If the input permutation $\pi$ is not empty and contains
strong fixed points\footnote{A letter in a permutation is a \emph{strong fixed
    point} if every letter preceding it is smaller, and every letter following it
  is larger.} let $x$ be the right-most such point, write $\pi = \alpha x \beta$
and recursively apply the method to $\alpha$ and $\beta$. If $\pi$ does not
contain a strong fixed point, we move every letter in $\pi$ smaller than
the first letter to the start of the permutation, thus creating a strong fixed point. Let $A$ be the set of
permutations that are sortable in at most one pass under this operator. The
output of $\bisc(A_{\leq 5},4)$ is the following.
\[
  \mpattern{scale=\pattdispscale}{ 3 }{ 1/3, 2/2, 3/1 }{} \qquad
  \mpattern{scale=\pattdispscale}{ 4 }{ 1/2, 2/4, 3/1, 4/3 }{} \qquad
  \mpattern{scale=\pattdispscale}{ 4 }{ 1/2, 2/1, 3/4, 4/3 }{2/2}
\]
Stuart Hannah~\cite{H} verified that the sortable permutations are precisely the avoiders
of these three patterns and that they are in bijection
with words on the alphabet $ (a+(ab^\star)(cc^\star))^\star$, which are counted by the sequence~\cite[A034943]{OEIS}.

\section{Future work}

It is important to remember that unless certain conditions are met, as in Theorems~\ref{thm:biscMainThm} and~\ref{thm:classical},
the output of $\bisc$ is a conjecture. A natural next step is to develop a stronger algorithm
that can prove the conjectures output by $\bisc$.

In cases where the human mathematician (or a future algorithm) has verified the output
of $\bisc$, a natural follow-up question is to try to enumerate the permutations in the
set by length. There already is an algorithm that can do this for many sets of permutations
defined by classical pattern avoidance; see Albert, Bean, Claesson, Nadeau, Pantone, Ulfarsson~\cite{combinatorial-exploration}. It would be interesting to see if
this algorithm can be extended to mesh patterns.

\bibliographystyle{amsalpha}
\bibliography{magulf_refs}

\end{document}